\title{\textsc{On finitely generated closures in\\ the theory of cutting planes}}
\author{Gennadiy Averkov\footnote{Institute for Mathematical Optimization, Faculty of Mathematics, University of Magdeburg, Universit\"atsplatz 2,  39106 Magdeburg, Germany; email: averkov@math.uni-magdeburg.de}}
\newcommand{\rmcmd}[1]{\mathop{\mathrm{#1}}\nolimits}
\newcommand{\conv}{\rmcmd{conv}}
\newcommand{\bd}{\rmcmd{bd}}
\newcommand{\real}{\mathbb{R}}
\newcommand{\natur}{\mathbb{N}}
\newcommand{\integer}{\mathbb{Z}}
\newcommand{\rational}{\mathbb{Q}}
\newcommand{\term}[1]{\emph{#1}}
\newcommand{\cP}{\mathcal{P}}
\newcommand{\setcond}[2]{\left\{#1 \, : \, #2 \right\}}
\newcommand{\sprod}[2]{\left< #1 \, , \, #2 \right>}
\newcommand{\dotvar}{\, \cdot \,}
\newcommand{\intr}{\rmcmd{int}}
\newcommand{\vol}{\rmcmd{vol}}
\newcommand{\cX}{\mathcal{X}}
\newcommand{\vx}{\rmcmd{vert}}
\newcommand{\rec}{\rmcmd{rec}}
\newcommand{\cH}{\mathcal{H}}
\newcommand{\cL}{\mathcal{L}}
\newcommand{\overtwocond}[2]{\substack{{#1} \\ {#2}}}
\newtheorem{nn}{}[section]
\newtheorem{theorem}[nn]{Theorem}
\newtheorem{proposition}[nn]{Proposition}
\newtheorem{lemma}[nn]{Lemma}
\newtheorem{corollary}[nn]{Corollary}
\theoremstyle{definition}
\newtheorem{example}[nn]{Example}
\newtheorem{remark}[nn]{Remark}
\newtheorem*{acknowledgements*}{Acknowledgements}
\begin{document}

\maketitle

\begin{abstract}
	Let $P$ be a rational polyhedron in $\real^d$ and let $\cL$ be a class of $d$-dimensional maximal lattice-free rational polyhedra in $\real^d$. For $L \in \cL$ by $R_L(P)$ we  denote the convex hull of points belonging to $P$ but not to the interior of $L$.  Andersen, Louveaux and Weismantel showed that if the so-called max-facet-width of all $L \in \cL$ is bounded from above by a constant independent of $L$, then $\bigcap_{L\in \cL} R_L(P)$ is a rational polyhedron. We give a short proof of a generalization of this result. We also give a characterization for the boundedness of the max-facet-width on $\cL$. The presented results are motivated by applications in cutting-plane theory from mixed-integer optimization.
\end{abstract}

\newtheoremstyle{itsemicolon}{}{}{\mdseries\rmfamily}{}{\itshape}{:}{ }{}
\newtheoremstyle{itdot}{}{}{\mdseries\rmfamily}{}{\itshape}{:}{ }{}
\theoremstyle{itdot}
\newtheorem*{msc*}{2010 Mathematics Subject Classification} 

\begin{msc*}
	Primary 90C11;  Secondary 52A20, 52B20, 90C10.
\end{msc*}

% 52A20 (convex sets in $n$ dimensions)
% 52B20 (lattice polytopes)
% 90C11 (mixed integer programming)
% 90C10 (integer programming)

\newtheorem*{keywords*}{Keywords}

\begin{keywords*}
	Chv\'atal-Gomory closure; cutting plane; max-facet-width; mixed-integer optimization; split closure.
\end{keywords*}

\section{Introduction}

We use standard background from convex geometry; see, for example, \cite[Chapter~1]{MR1216521} and \cite[Part~III]{MR874114}. Let $d \in \natur$. By $o$ we denote the origin of $\real^d$. The standard scalar product of $\real^d$ is denoted by $\sprod{\dotvar}{\dotvar}$. For $n \in \natur$ we use the notation $[n]:=\{1,\ldots,n\}$.  Let $L$ be a $d$-dimensional polyhedron in $\real^d$. We introduce the functional $R_L$ by 
\begin{equation*} 
	R_L(X) := \conv(X \setminus \intr(L)),
\end{equation*}
where `$\conv$' and `$\intr$' stand for the convex hull and the interior, respectively, and $X \subseteq \real^d$. Assume that the polyhedron $L$ is rational. If $L \ne \real^d$ and the recession cone of $L$ is a linear space, then by $m(L)$ we denote the minimal value $m \in \natur$ such that $L$ can be given by
\begin{equation} 
	 L  =  \setcond{x \in \real^d}{b_i - m \le \sprod{a_i}{x} \le b_i \quad \forall i \in [n]}, \label{L:and:k:def}
\end{equation}
where
\begin{equation}
	n \in \natur, \  a_1,\ldots,a_n \in \integer^d \setminus \{o\}, \ b_1,\ldots,b_n \in \integer.	\label{integral:params}
\end{equation}
If $L=\real^d$ or the recession cone of $L$ is not a linear space, let $m(L):=+\infty$. With some further restrictions on $L$, the authors of \cite{MR2676765} use the term \term{max-facet-width} to refer to $m(L)$. It is not difficult to show that for $m(L)<+\infty$ the functional $R_L$ maps rational polyhedra to rational polyhedra. For a family $\cL$ of $d$-dimensional rational polyhedra in $\real^d$ we define 
\begin{equation} \label{m:for:family}
	m(\cL) := \sup_{L \in \cL} m(L).
\end{equation}
As an example to \eqref{m:for:family},  consider $\cL$ consisting of all \term{split sets} $L \subseteq \real^d$, i.e., sets of the form $L=\setcond{x \in \real^d}{i-1 \le \sprod{a}{x} \le i}$ with $a \in \integer^d \setminus \{o\}$ and $i \in \integer$. For such $\cL$ one has $m(\cL)=1$. In this note we present two theorems, which are motivated by \cite{MR2676765}. Our first theorem is a strengthening of the main result from \cite[Theorem~4.3]{MR2676765}.

\begin{theorem} \label{finite:generation:thm} 
	Let $P$ be a rational polyhedron in $\real^d$ and  let $\cL$ be a family of $d$-dimensional rational polyhedra in $\real^d$ satisfying $m(\cL) < +\infty$.
	Then there exists a finite subfamily $\cL'$ of $\cL$ such that every $L \in \cL$ satisfies  $R_{L'}(P) \subseteq R_L(P)$ for some $L'\in \cL'$.
\end{theorem}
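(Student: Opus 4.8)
The plan is to reduce $R_L$ to its action on \emph{slabs}, to establish a finiteness statement for slab-cuts, and then to reassemble. Put $m := m(\cL)$, which is finite by hypothesis, and fix $L \in \cL$. Writing $L$ as in \eqref{L:and:k:def} with parameter $m$ and splitting the $i$-th pair of inequalities yields slabs $S_i := \setcond{x \in \real^d}{b_i - m \le \sprod{a_i}{x} \le b_i}$ with $m(S_i) \le m$ and $L = \bigcap_{i=1}^n S_i$. Since the interior of a finite intersection equals the intersection of the interiors, $\intr(L) = \bigcap_{i=1}^n \intr(S_i)$; hence $P \setminus \intr(L) = \bigcup_{i=1}^n \bigl( P \setminus \intr(S_i) \bigr)$, and because $\conv$ of a union is $\conv$ of the convex hulls,
\begin{equation*}
	R_L(P) = \conv\Bigl( \bigcup_{i=1}^n R_{S_i}(P) \Bigr).
\end{equation*}
In addition $\rec(R_L(P)) = \rec(P)$ (a short argument using that $\rec(L)$ is a linear subspace when $m(L) < +\infty$), so $R_L$ acts only near the finitely many minimal faces of $P$, and only a minimal face met by every $\intr(S_i)$ can be affected.

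The technical heart is the following claim: \emph{there is a finite collection of slabs $T_1, \dots, T_N$ with $m(T_j) \le m$ such that every slab $S$ with $m(S) \le m$ satisfies $R_{T_j}(P) \subseteq R_S(P)$ for some $j \in [N]$.} To see this, note first that $R_S(P) \subsetneq P$ forces $\intr(S)$ to meet a minimal face $F$ of $P$ without containing it (a slab of bounded width cannot contain an unbounded face, and for a vertex $F$ this simply says that $S$ strictly separates $F$ from the rest of $P$). For a fixed normal $a$, the position of $S$ relative to the finitely many vertices of the pointed part of $P$ falls into one of finitely many combinatorial types — the cells cut out on the parameter line by the numbers $\sprod{a}{v}$ — and within such a type the constraint $m(S) \le m$ bounds how far the cutting hyperplanes $\setcond{x}{\sprod{a}{x} = b-m}$ and $\setcond{x}{\sprod{a}{x} = b}$ reach into $P$, so an inclusion-minimal cut exists and is attained there. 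Finally, using that $P$ is rational — so its vertices have bounded denominators — one shows that the slabs realizing inclusion-minimal cuts have normal $a \in \integer^d$ and offset $b \in \integer$ confined to a finite set. I expect this last boundedness, together with the reduction of an unbounded $P$ to its recession cone, to be the main obstacle.

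Granting the claim, the rest is bookkeeping. For $L = \bigcap_{i=1}^n S_i \in \cL$ choose, for each $i$, an index $j(i)$ with $R_{T_{j(i)}}(P) \subseteq R_{S_i}(P)$; then, with $J := \{\, j(i) : i \in [n] \,\} \subseteq [N]$, the displayed identity gives
\begin{equation*}
	R_L(P) = \conv\Bigl( \bigcup_{i=1}^n R_{S_i}(P) \Bigr) \;\supseteq\; \conv\Bigl( \bigcup_{j \in J} R_{T_j}(P) \Bigr) \;=:\; D_J ,
\end{equation*}
so the family $\mathcal{R} := \setcond{R_L(P)}{L \in \cL}$ is bounded from below, in the inclusion order, by the finite family $\setcond{D_J}{\emptyset \ne J \subseteq [N]}$ (every member of $\mathcal{R}$ contains some $D_J$). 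Combined with the same boundedness input — now applied to those $L \in \cL$ whose cut $R_L(P)$ is inclusion-minimal in $\mathcal{R}$, which can be taken with boundedly many defining slabs of bounded norm and offset — this shows that $\mathcal{R}$ has no infinite strictly decreasing chain and only finitely many inclusion-minimal members $\rho_1, \dots, \rho_M$, and that every element of $\mathcal{R}$ contains one of them. Since each $\rho_t$ lies in $\mathcal{R}$, pick $L_t \in \cL$ with $R_{L_t}(P) = \rho_t$. Then $\cL' := \{L_1, \dots, L_M\}$ is a finite subfamily of $\cL$ as required: given $L \in \cL$, the polyhedron $R_L(P)$ contains some $\rho_t$, i.e.\ $R_{L_t}(P) \subseteq R_L(P)$.
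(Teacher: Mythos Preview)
Your slab decomposition $R_L(P) = \conv\bigl(\bigcup_i R_{S_i}(P)\bigr)$ is correct, but the argument has two genuine gaps.

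First, the ``claim'' is exactly Theorem~\ref{finite:generation:thm} specialised to the family of all slabs with $m(S)\le m$, and your sketch does not prove it. Your per-direction analysis is fine: for each fixed primitive $a\in\integer^d$ there are only finitely many relevant offsets $b$. The trouble is passing from one direction to all directions. Slabs achieving inclusion-minimal $R_S(P)$ need \emph{not} have bounded normal; as $\|a\|$ grows the slabs become Euclidean-thin, but the resulting sets $R_S(P)$ can remain pairwise incomparable across directions, and nothing in ``vertices of $P$ have bounded denominator'' confines $a$ to a finite set. What rationality of $P$ actually buys is the integrality statement isolated in the paper as Lemma~\ref{integrality:lem}: if an edge $e$ of $P$ is bisected by $L$, then the parameter $\lambda$ locating $e\cap\bd(L)$ along $e$ satisfies $(hm)!/\lambda\in\natur$ for a fixed $h$ depending only on $P$. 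This sends each $L$ to a point of $\natur^s$ (one coordinate per bisected edge), and the Gordan--Dickson lemma on $\natur^s$ furnishes the finite subfamily. That lemma, or an equivalent well-quasi-order argument, is the missing engine; you yourself flag this step as ``the main obstacle''.

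Second, even granting the claim, the bookkeeping does not close. You obtain $R_L(P)\supseteq D_{J(L)}$ for some $J(L)\subseteq[N]$, but $D_J=\conv\bigl(\bigcup_{j\in J}R_{T_j}(P)\bigr)=R_{L_J}(P)$ with $L_J=\bigcap_{j\in J}T_j$, and $L_J$ need not belong to $\cL$. Being bounded below by a finite \emph{external} family does not imply that $\mathcal{R}=\setcond{R_L(P)}{L\in\cL}$ has only finitely many inclusion-minimal members, nor that every member contains a minimal one: a family of convex sets can sit above a single common subset and still consist of infinitely many pairwise incomparable elements. Your proposed remedy---that inclusion-minimal $L$ ``can be taken with boundedly many defining slabs of bounded norm and offset''---is unjustified and is essentially the conclusion you seek.

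The paper avoids both difficulties by skipping the slab detour. After reducing to line-free $P$ and splitting $\cL$ into finitely many parts according to which edges of $P$ are preserved, removed, or bisected, it parametrises $R_L(P)$ directly by the edge-intersection tuple $(\lambda_{1,L},\dots,\lambda_{s,L})$ via Lemma~\ref{descr:R}, lands in $\natur^s$ via Lemma~\ref{integrality:lem}, and applies Gordan--Dickson once. The same mechanism would prove your slab claim, but once it is available for general $L$ the decomposition into slabs becomes unnecessary.
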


Regarding Theorem~\ref{finite:generation:thm}, our  contribution is not so much the theorem itself as its short self-contained proof. Note that the complete proof of the corresponding Theorem~4.3 from \cite[\S\S2-4]{MR2676765} occupies nearly 18 pages. Our proof of Theorem~\ref{finite:generation:thm} employs basic facts from convex geometry and the well-known Gordan-Dickson lemma.

 In order to explain  the relation of Theorem~\ref{finite:generation:thm} to mixed-integer optimization we need several further notions. A subset $L$ of $\real^d$ is called \term{lattice-free} if $L$ is a $d$-dimensional closed convex set and $\intr(L) \cap \integer^d = \emptyset$. Furthermore, we call $L$ \term{maximal lattice-free} if $L$ is a lattice-free set which is not properly contained in another lattice-free set. Given a polyhedron $P$ in $\real^d$ and a family $\cL$ of $d$-dimensional polyhedra in $\real^d$, we call a closed halfspace $H$ an \term{$\cL$-cut} for $P$ if $H \supseteq P \setminus \intr(L)$ for some $L \in \cL$. If $\cL$ consists of lattice-free sets, one obviously has $P \cap \integer^d = P \cap H \cap \integer^d$ for every $\cL$-cut $H$ for $P$. The latter property is used by cutting-plane methods for solving integer and mixed-integer programs; for more details see \cite{MR2676765,MR0290793,conforti2011corner,DelPiaWeismantel10}. In particular we notice the well-known \term{intersection cuts}, which were introduced in \cite{MR0290793}, can be expressed in terms of $\cL$-cuts described above. The study of intersection cuts is an active area of research; see \cite{MR2676765,conforti2011corner,DelPiaWeismantel10} and the references therein for some of the recent contributions. We also refer to \cite{conf-et-al-polyhedral-approaches} for an overview on polyhedral approaches to mixed-integer optimization. 

As a consequence of Theorem~\ref{finite:generation:thm} we obtain.
\begin{corollary} \label{finite:generation:cor} Let $P$ and $\cL$ be as in Theorem~\ref{finite:generation:thm}. Then there exists a finite family $\cH$ of $\cL$-cuts such that each $H \in \cH$ is a rational halfspace and $\bigcap_{L \in \cL} R_L(P) =  \bigcap \cH.$ In particular, $\bigcap_{L \in \cL} R_L(P)$ is a rational polyhedron.
\end{corollary}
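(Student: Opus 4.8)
The plan is to derive the corollary directly from Theorem~\ref{finite:generation:thm}, using in addition only the fact noted in the introduction that the functional $R_{L'}$ maps rational polyhedra to rational polyhedra whenever $m(L') < +\infty$.

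First I would apply Theorem~\ref{finite:generation:thm} to fix a finite subfamily $\cL' \subseteq \cL$ such that every $L \in \cL$ satisfies $R_{L'}(P) \subseteq R_L(P)$ for some $L' \in \cL'$. I then claim that
\[
	\bigcap_{L \in \cL} R_L(P) = \bigcap_{L' \in \cL'} R_{L'}(P).
\]
The inclusion ``$\subseteq$'' is immediate since $\cL' \subseteq \cL$. For the reverse inclusion, given any $L \in \cL$ I pick $L' \in \cL'$ with $R_{L'}(P) \subseteq R_L(P)$; then $\bigcap_{M \in \cL'} R_M(P) \subseteq R_{L'}(P) \subseteq R_L(P)$, and since this holds for every $L \in \cL$, I obtain $\bigcap_{M \in \cL'} R_M(P) \subseteq \bigcap_{L \in \cL} R_L(P)$.

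Next, because $m(\cL) < +\infty$ forces $m(L') < +\infty$ for each of the finitely many $L' \in \cL'$, every set $R_{L'}(P)$ is a rational polyhedron, hence admits a representation $R_{L'}(P) = \bigcap_{H \in \cH_{L'}} H$ as an intersection of a finite family $\cH_{L'}$ of rational closed halfspaces. Each such $H$ contains $R_{L'}(P) = \conv(P \setminus \intr(L')) \supseteq P \setminus \intr(L')$, so $H$ is an $\cL$-cut for $P$, witnessed by $L' \in \cL' \subseteq \cL$. Setting $\cH := \bigcup_{L' \in \cL'} \cH_{L'}$, which is finite, I get $\bigcap \cH = \bigcap_{L' \in \cL'} R_{L'}(P) = \bigcap_{L \in \cL} R_L(P)$, and the left-hand side is a finite intersection of rational halfspaces, hence a rational polyhedron.

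I do not expect a genuine obstacle here: the corollary is essentially a repackaging of Theorem~\ref{finite:generation:thm}. The only points that need a moment's care are the equality of the two intersections displayed above and the observation that any halfspace occurring in a representation of $R_{L'}(P)$ automatically qualifies as an $\cL$-cut for $P$; both become routine once the containment $P \setminus \intr(L') \subseteq R_{L'}(P)$ is made explicit. For a fully self-contained write-up one should additionally recall, or briefly verify, that $R_{L'}$ preserves rational polyhedra when $m(L')<+\infty$, since this is the sole external input beyond Theorem~\ref{finite:generation:thm}.
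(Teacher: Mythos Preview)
Your proposal is correct and follows essentially the same route as the paper, which simply states that the corollary is a straightforward consequence of Theorem~\ref{finite:generation:thm} and Lemma~\ref{descr:R}. The only cosmetic difference is that you invoke the remark from the introduction that $R_{L'}$ preserves rational polyhedra when $m(L')<+\infty$, whereas the paper points to Lemma~\ref{descr:R} for the polyhedrality of $R_{L'}(P)$; these amount to the same ingredient.
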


In the terminology of the cutting-plane theory the operation  $P \mapsto \bigcap_{L \in \cL} R_L(P)$ from Corollary~\ref{finite:generation:cor} is referred to as the \term{closure} (associated to the family of all $\cL$-cuts). Direct application of Corollary~\ref{finite:generation:cor} yields the results on polyhedrality of the \term{Chv\'atal-Gomory closure} and the \term{split closure} of a rational polyhedron (see  \cite{MR0313080,MR1059391,MR597387}). We also refer to two remarkable polyhedrality results of a somewhat different nature: the result from \cite{basu2011triangle} on polyhedrality of the so-called triangle closure and the result from \cite{MR2820903} on polyhedrality of the Chv\'atal-Gomory closure of an arbitrary compact convex set.

Since existing cutting-plane methods are based on lattice-free sets and since maximal lattice-free sets generate the strongest cuts within the family of all lattice-free sets, the study of families $\cL$ consisting of maximal lattice-free sets is of particular importance (see also \cite{ave-wag-2012,MR2855866,MR2832401} for related recent results). For such families $\cL$ in certaion situations the assumption of Theorem~\ref{finite:generation:thm} can be reformulated in an equivalent form. This is provided by our next theorem. Two sets $X, Y \subseteq \real^d$ are called \term{$\integer^d$-equivalent} if $Y = U(X)+b$, for some $d \times d$ unimodular matrix $U$ and a vector $b \in \integer^d$. A family $\cX$ of subsets of $\real^d$ is called \term{finite up to $\integer^d$-equivalence} if there exist finitely many sets $X_1,\ldots,X_t$ $(t \in \natur)$ in $\real^d$ such that each $X \in \cX$ is $\integer^d$-equivalent to some $X_i$ for $i \in [t]$.

\begin{theorem} \label{simple-reason-theorem}
	Let $\cL$ be a family of maximal lattice-free rational polyhedra in $\real^d$ such that $\dim (\conv (L \cap \integer^d)) = d$ for every $L \in \cL$. Then the following conditions are equivalent: 
	\begin{enumerate}[(i)]
		\item $m(\cL) < +\infty$; 
		\item $\cL$ is finite up to $\integer^d$-equivalence.
	\end{enumerate}
\end{theorem}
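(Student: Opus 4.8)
The plan is to prove the two implications separately, treating (ii)$\Rightarrow$(i) as routine and devoting the bulk of the work to (i)$\Rightarrow$(ii). Two standard facts about maximal lattice-free convex sets will be used throughout (see, e.g., \cite{conforti2011corner,DelPiaWeismantel10} and the references therein): such a set $L\subseteq\real^d$ is a polyhedron whose recession cone equals its lineality space $\lineal(L)$, and each facet of $L$ contains a point of $\integer^d$ in its relative interior. I also record that $m(\,\cdot\,)$ is invariant under $\integer^d$-equivalence: if $Y=U(X)+b$ with $U$ unimodular and $b\in\integer^d$, then substituting $x=U^{-1}(y-b)$ turns a representation \eqref{L:and:k:def} of $X$ into one of $Y$ with the same value of $m$ and again with integral data, because $(U^{-1})^{\top}$ is unimodular; likewise $\rec(X)$ is a linear space if and only if $\rec(Y)$ is. For (ii)$\Rightarrow$(i), choose $X_1,\dots,X_t$ witnessing that $\cL$ is finite up to $\integer^d$-equivalence. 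Each $X_i$ is $\integer^d$-equivalent to a member of $\cL$, hence is itself maximal lattice-free, hence $\rec(X_i)=\lineal(X_i)$ and $m(X_i)<+\infty$. By the invariance of $m$, every $L\in\cL$ satisfies $m(L)=m(X_i)$ for the relevant $i$, so $m(\cL)\le\max_{i\in[t]}m(X_i)<+\infty$.

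Now assume (i) and put $\bar m:=m(\cL)$, which is a natural number since the supremum of a bounded set of natural numbers is attained. Fix $L\in\cL$. As $m(L)\le\bar m<+\infty$, the definition of $m$ forces $\rec(L)=\lineal(L)$ to be a rational linear subspace $W$ with $k:=\dim W\le d-1$; being rational, $W$ is spanned by the saturated sublattice $W\cap\integer^d$, a basis of which extends to a basis of $\integer^d$, so some unimodular map sends $W$ onto $\real^k\times\{o\}$. Passing to the resulting $\integer^d$-equivalent copy (which changes neither $m(L)$ nor the dimension of $\conv(L\cap\integer^d)$), I may assume $W=\real^k\times\{o\}$. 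Writing points of $\real^d$ as $(y,z)$ with $z\in\real^{d-k}$, the identity $L+W=L$ gives $L=\real^k\times L'$ for a polytope $L'\subseteq\real^{d-k}$. Moreover $\conv(L\cap\integer^d)=\real^k\times\conv(L'\cap\integer^{d-k})$, so $\conv(L'\cap\integer^{d-k})$ is $(d-k)$-dimensional; every slab in a representation \eqref{L:and:k:def} of $L$ must have its normal vector supported on the $z$-coordinates (otherwise $L$ would be bounded in a direction of $W$ which that slab bounds), whence $m(L')=m(L)\le\bar m$; and each facet of $L'$ inherits a relative-interior point of $\integer^{d-k}$ from the corresponding facet $\real^k\times(\,\cdot\,)$ of the maximal lattice-free polyhedron $L$. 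Since $k$ ranges over the fixed finite set $\{0,\dots,d-1\}$, and $\real^k\times L_1'$, $\real^k\times L_2'$ are $\integer^d$-equivalent whenever $L_1',L_2'$ are $\integer^{d-k}$-equivalent, it suffices to prove, for each $e\in\{1,\dots,d\}$: a polytope $L'\subseteq\real^e$ that is lattice-free, satisfies $\dim\conv(L'\cap\integer^e)=e$ and $m(L')\le\bar m$, and has a point of $\integer^e$ in the relative interior of each of its facets, lies in one of finitely many $\integer^e$-equivalence classes.

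To prove this, fix such an $L'$ and write $L'=\setcond{x\in\real^e}{\sprod{a_j}{x}\le\beta_j,\ j\in[N]}$ with each $a_j\in\integer^e$ primitive and each inequality facet-defining; evaluating $\sprod{a_j}{\,\cdot\,}$ at the prescribed relative-interior lattice point of the $j$-th facet gives $\beta_j\in\integer$. Comparing this with a representation \eqref{L:and:k:def} of $L'$ of parameter $m(L')\le\bar m$: each facet-defining inequality must, after a positive integer scaling, occur among the slab inequalities, and the opposite side of that slab yields $\min_{x\in L'}\sprod{a_j}{x}\ge\beta_j-\bar m$. As $L'$ is bounded and full-dimensional its normals span $\real^e$, so picking $e$ linearly independent ones shows that $L'$ lies in a parallelepiped of volume at most $\bar m^{\,e}$, the relevant determinant being a nonzero integer. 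Choose affinely independent $v_0,\dots,v_e\in L'\cap\integer^e$; the simplex $\conv\{v_0,\dots,v_e\}\subseteq L'$ has normalized volume $\Delta:=\card{\det(v_1-v_0,\dots,v_e-v_0)}\le e!\,\bar m^{\,e}$. After translating by $-v_0$ and reducing the integer matrix $(v_1-v_0\mid\dots\mid v_e-v_0)$ to Hermite normal form by a unimodular map, I may assume $v_0=o$ and $\|v_i\|\le C_1$ with $C_1$ depending only on $e$ and $\bar m$. Then $\beta_j-\bar m\le\sprod{a_j}{v_i}\le\beta_j$ for all $i$: the case $i=0$ forces $\beta_j\in\{0,1,\dots,\bar m\}$, and since $v_1,\dots,v_e$ form a basis of $\real^e$ with determinant $\pm\Delta$, hence of bounded inverse, each $a_j\in\integer^e$ is confined to a bounded region. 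Thus there are finitely many admissible normals $a_j$, therefore finitely many facets, and finitely many values $\beta_j$, leaving only finitely many polytopes $L'$; this completes the proof.

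I expect the core difficulty to lie entirely in the last finiteness statement, where two points need care. First, converting the single global bound $m(L')\le\bar m$ into the bound $\min_{x\in L'}\sprod{a_j}{x}\ge\beta_j-\bar m$ for every \emph{primitive} facet normal $a_j$: this rests on the observation that every facet-defining inequality of a polyhedron occurs, up to a positive multiple, in each of its slab representations, where the slab normals need not be primitive. Second, choosing the correct unimodular normalization: one must normalize a lattice simplex inside $L'$ — whose normalized volume is controlled precisely because $L'$ lies in a bounded-volume parallelepiped — rather than normalizing $L'$ itself, whose diameter is not controlled before the normalization.
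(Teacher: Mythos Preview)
Your proof is correct and follows the same overall architecture as the paper's: reduce to the bounded case via the lineality space, bound the volume of $L$ by $m(\cL)^d$, pass to a $\integer^d$-equivalent normal form, and then show that only finitely many facet data $(a_j,\beta_j)$ are possible. The execution of the last two steps differs. The paper first invokes an external finiteness result (B\'ar\'any--Vershik \cite{MR1191566}) to conclude that the full lattice polytopes $P=\conv(L\cap\integer^d)$ fall into finitely many $\integer^d$-classes, and then, for each fixed $P$, bounds the facet normals via the Euclidean width $\delta$ of $P$ and a circumscribing radius $\rho$ (Lemma~\ref{circumscribed:finite:lem}). You instead extract only an affinely independent $(e{+}1)$-tuple of lattice points, normalize its difference matrix to Hermite normal form, and bound the $a_j$ by inverting that matrix. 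The trade-off: the paper's route cleanly separates ``normalize the lattice points'' from ``bound the circumscribing polytopes for a fixed $P$'', which may be conceptually tidier; your route is more self-contained, avoiding the literature citation and carrying out the normalization explicitly. Both rest on the same essential observation, namely that for every primitive facet normal $a_j$ the slab representation forces $\min_{x\in L'}\sprod{a_j}{x}\ge\beta_j-\bar m$.
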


The implication (ii) $\Rightarrow$ (i) can be verified easily. Thus, (ii) is a `simple reason' of $m(\cL) < +\infty$. Theorem~\ref{simple-reason-theorem} asserts that, under the given assumptions, (ii) is the `only reason' of $m(\cL)<+\infty$. It might seem surprising that the assumption $\dim(\conv(L \cap \integer^d))=d$ in Theorem~\ref{simple-reason-theorem} is relevant for every $d \ge 3$. In fact, for every $d \ge 3$ one can construct a family $\cL$ of $d$-dimensional maximal lattice-free rational polyhedra with $\dim(\conv (L \cap \integer^d)) < d$ for every $L \in \cL$ and such that, for this family, Condition~(i) is fulfilled while Condition~(ii) is not fulfilled (see Example~\ref{counterexample} from Section~\ref{simple-reason-section}). In contrast to this, for $d=2$ the assumption $\dim(\conv (L \cap \integer^d))=d$ in Theorem~\ref{simple-reason-theorem} can be omitted. More precisely, as a consequence of Theorem~\ref{simple-reason-theorem} and a characterization of maximal lattice-free sets given by Lov\'asz in \cite[\S3]{MR1114315} we obtain the following.

\begin{corollary} \label{simple-reason-corollary}
	Let $\cL$ be a family of maximal lattice-free rational polyhedra in $\real^2$. Then the following conditions are equivalent: 
	\begin{enumerate}[(i)]
		\item $m(\cL) < +\infty$; 
		\item $\cL$ is finite up to $\integer^2$-equivalence.
	\end{enumerate}
\end{corollary}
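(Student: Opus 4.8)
The plan is to obtain Corollary~\ref{simple-reason-corollary} from Theorem~\ref{simple-reason-theorem} by checking that, in the plane, the extra hypothesis $\dim(\conv(L\cap\integer^2))=d$ of Theorem~\ref{simple-reason-theorem} holds automatically for every maximal lattice-free rational polyhedron $L$. Once this is in place, the implication (i)~$\Rightarrow$~(ii) becomes a direct application of Theorem~\ref{simple-reason-theorem} with $d=2$, and the implication (ii)~$\Rightarrow$~(i) is the routine one already announced after Theorem~\ref{simple-reason-theorem}.

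For (ii)~$\Rightarrow$~(i) I would first record that $m(\dotvar)$ is invariant under $\integer^d$-equivalence. Indeed, if $Y=U(X)+b$ with $U$ a unimodular $d\times d$ matrix and $b\in\integer^d$, then substituting $x=U^{-1}(y-b)$ into a representation of $X$ of the form \eqref{L:and:k:def}--\eqref{integral:params} produces a representation of $Y$ with the \emph{same} $m$, the transformed normals $U^{-\top}a_i\in\integer^d\setminus\{o\}$ and the transformed right-hand sides $b_i+\sprod{U^{-\top}a_i}{b}\in\integer$ being again admissible; by symmetry $m(X)=m(Y)$. Consequently, if $\cL$ is finite up to $\integer^2$-equivalence, choose finitely many $X_1,\dots,X_t\in\cL$ that cover $\cL$ up to $\integer^2$-equivalence; then $m(\cL)=\max_{i\in[t]}m(X_i)$, which is finite because each $X_i$ is a rational polyhedron different from $\real^2$ whose recession cone is a linear space, so $m(X_i)<+\infty$.

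The substance is the direction (i)~$\Rightarrow$~(ii), where I would invoke Lov\'asz's classification of maximal lattice-free convex sets in the plane \cite[\S3]{MR1114315}: every $L\in\cL$ is a split set, a triangle, or a quadrilateral, and in the latter two cases the relative interior of each edge of $L$ contains a point of $\integer^2$, while a split set lies between two consecutive lattice lines and hence carries points of $\integer^2$ on each of its two boundary lines. I would then use the elementary observation that a line in $\real^2$ meets the relative interiors of at most two edges of a two-dimensional polyhedron. For $L$ a split this immediately forces $L\cap\integer^2$ not to lie on a line; for $L$ a triangle or a quadrilateral the (at least three) lattice points sitting one in the relative interior of each edge cannot be collinear for the same reason. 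Hence $\dim(\conv(L\cap\integer^2))=2$ in all cases, $\cL$ satisfies the hypotheses of Theorem~\ref{simple-reason-theorem}, and (i)~$\Rightarrow$~(ii) follows.

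I do not expect a genuine obstacle: the corollary is essentially a repackaging of Theorem~\ref{simple-reason-theorem}. The only step that requires an idea rather than bookkeeping is the reduction above --- the geometric fact that the lattice points forced onto the boundary of a planar maximal lattice-free polyhedron cannot all be collinear --- and this is exactly where Lov\'asz's two-dimensional classification enters. A minor point worth stating carefully is that Lov\'asz's classification describes \emph{all} maximal lattice-free convex sets in $\real^2$, not merely polyhedral ones, so nothing is lost by our restriction to rational polyhedra.
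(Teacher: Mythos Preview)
Your proof is correct and follows the paper's approach: verify via Lov\'asz that $\dim(\conv(L\cap\integer^2))=2$ for every $L\in\cL$ and then apply Theorem~\ref{simple-reason-theorem}. The paper packages the unbounded case slightly differently---it first removes all unbounded $L$, since by Theorem~\ref{lovasz-char}.II they are each $\integer^2$-equivalent to $[0,1]\times\real$, and then checks $\dim=2$ only for the bounded members---whereas you argue $\dim=2$ directly for splits; note that your ``a line meets the relative interiors of at most two edges'' observation does not by itself settle the split case (a split has exactly two facets), but the fact you already recorded, that its two boundary lines are distinct parallel lattice lines, does.
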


\section{Proofs of Theorem~\ref{finite:generation:thm} and Corollary~\ref{finite:generation:cor}} \label{sect:proofs}

Given a polyhedron $P$ in $\real^d$, by $\vx(P)$ and $\rec(P)$ we denote the set of all vertices of $P$ and the recession cone of $P$, respectively. If $P$ is line-free, one has $P = \conv(\vx(P)) + \rec(P)$. The set $\gamma:= \setcond{x+ \lambda u}{ \lambda \ge 0}$ with $x \in \real^d$ and $u \in \real^d \setminus \{o\}$ is called the \term{ray} emanating from $x$ and having direction $u$. The well-known Gordan-Dickson lemma (see \cite{Gordan1899}) states that if $X$ is a subset of $\natur^d$ then there exists a finite subset $X'$ of $X$ such that every $x \in X$ satisfies $x' \le x$ for some $x' \in X'$ (here $\le$ is the standard partial order on $\real^d$ introduced by comparison of respective components). Apart from `$\conv$' and `$\intr$' we also use the abbreviations `$\bd$' and `$\vol$', which stand for the boundary and the volume, respectively. The following proposition has a straightforward proof.

\begin{proposition} \label{boundedness:prop}
	Let $L$ be a $d$-dimensional polyhedron in $\real^d$ such that $\rec(L)$ is a linear space. Let $\gamma$ be a ray in $\real^d$ with $\gamma \not\subseteq \intr(L)$. Then the set $\intr(L) \cap \gamma$ is bounded.
\end{proposition}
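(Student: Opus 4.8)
The plan is to argue directly from an inequality description of $L$, using a single case distinction according to whether the direction of $\gamma$ is a recession direction. Since $L$ is $d$-dimensional it is nonempty, so we may write $L = \setcond{x \in \real^d}{\sprod{a_i}{x} \le b_i \ \forall i \in [n]}$ with $a_1,\dots,a_n \in \real^d \setminus \{o\}$ and $b_1,\dots,b_n \in \real$. It is standard that then $\intr(L) = \setcond{x \in \real^d}{\sprod{a_i}{x} < b_i \ \forall i \in [n]}$ and $\rec(L) = \setcond{v \in \real^d}{\sprod{a_i}{v} \le 0 \ \forall i \in [n]}$. The hypothesis that $\rec(L)$ is a linear space then yields the key observation that $\sprod{a_i}{v} = 0$ for all $i \in [n]$ and all $v \in \rec(L)$: indeed, $v \in \rec(L)$ implies $-v \in \rec(L)$, whence $\sprod{a_i}{v} \le 0$ and $-\sprod{a_i}{v} \le 0$.

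Now write $\gamma = \setcond{p + \lambda u}{\lambda \ge 0}$ with $u \in \real^d \setminus \{o\}$, and consider the affine functions $f_i(\lambda) := \sprod{a_i}{p} + \lambda \sprod{a_i}{u}$, so that $p + \lambda u \in \intr(L)$ holds if and only if $f_i(\lambda) < b_i$ for all $i \in [n]$. I would then split into two cases. If $u \notin \rec(L)$, then by the description of $\rec(L)$ there is an index $j$ with $\sprod{a_j}{u} > 0$, so $f_j(\lambda) \to +\infty$ as $\lambda \to +\infty$; picking $\Lambda \ge 0$ with $f_j(\lambda) \ge b_j$ for all $\lambda \ge \Lambda$ gives $p + \lambda u \notin \intr(L)$ for every $\lambda \ge \Lambda$, so $\intr(L) \cap \gamma$ is contained in the bounded segment $\setcond{p + \lambda u}{0 \le \lambda < \Lambda}$. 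If instead $u \in \rec(L)$, then $\sprod{a_i}{u} = 0$ for every $i$ by the observation above, so each $f_i$ is constant; hence $p + \lambda u \in \intr(L)$ holds simultaneously for all $\lambda \ge 0$ or for none. The former possibility would mean $\gamma \subseteq \intr(L)$, contradicting the assumption $\gamma \not\subseteq \intr(L)$, so $\intr(L) \cap \gamma = \emptyset$, which is bounded.

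These two cases are exhaustive, so the proposition follows. I do not expect a genuine obstacle here; the proof is essentially bookkeeping. The only two points that call for an explicit word of justification are the identification of $\intr(L)$ with the strict-inequality region (which uses that $L$, being $d$-dimensional, is full-dimensional, and that the defining normals $a_i$ may be taken nonzero) and the passage from ``$\rec(L)$ is a linear space'' to ``$\sprod{a_i}{\dotvar}$ vanishes on $\rec(L)$'', both of which are one-line arguments.
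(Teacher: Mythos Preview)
Your argument is correct: the case split on whether $u\in\rec(L)$, together with the observation that $\rec(L)$ being a linear space forces $\sprod{a_i}{u}=0$ for every $i$ when $u\in\rec(L)$, handles everything cleanly. The paper does not actually supply a proof of this proposition---it merely remarks that the proof is straightforward---so your direct verification via an inequality description of $L$ is precisely the kind of elementary check the paper intends the reader to carry out.
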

% \begin{proof}
% 	Choose $u \in \rec(\gamma) \setminus \{o\}$ and
% 	$x \in \gamma \setminus \intr(L)$. If $\intr(L) \cap \gamma$ was unbounded, then one would have $x+ \lambda u \in L$ for all sufficiently large $\lambda >0$ and $x + \lambda u \not\in L$ for all $\lambda<0$. The latter would yield $u \in \rec(L)$ and $-u \not\in \rec(L)$, contradicting the assumption on $\rec(L)$.
% \end{proof}

The following lemma seems to be folklore (see \cite[Lemmas~2.3 and 2.4]{MR2676765} and \cite[Corollary~11.3]{conf-et-al-polyhedral-approaches} for related statements). In order to keep the presentation self-contained, we give a short geometric proof.

\begin{lemma} \label{descr:R}
	Let $P$ be a line-free polyhedron in $\real^d$ and let $L$ be a $d$-dimensional polyhedron such that $\rec(L)$ is a linear space. Then $R_L(P)$ is a polyhedron. Furthermore, one has 
	\begin{equation} \label{equality:with:edges}
		R_L(P) = R_L(S) + \rec(P),
	\end{equation}
	where $S$ be the union of all edges of $P$.
\end{lemma}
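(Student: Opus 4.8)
The plan is to prove Lemma~\ref{descr:R} in two stages: first establish the set identity \eqref{equality:with:edges}, and then deduce polyhedrality from it. Throughout I write $K := \intr(L)$ for brevity of thought, noting that since $\rec(L)$ is a linear space, Proposition~\ref{boundedness:prop} applies: along any ray not contained in $K$, the part inside $K$ is bounded.

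\medskip

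\textbf{Step 1: the inclusion $R_L(S) + \rec(P) \subseteq R_L(P)$.} This direction is the easy one. Since $S \subseteq P$ we have $R_L(S) \subseteq R_L(P)$. Moreover $R_L(P) = \conv(P \setminus \intr(L))$ and I claim $\rec(R_L(P)) \supseteq \rec(P)$: indeed $P\setminus\intr(L)$ is non-empty (as $\intr(L)\cap P$ is, e.g., bounded on rays, so cannot swallow all of the unbounded... more carefully, if $P \subseteq \intr(L)$ then every ray of $P$ lies in $\intr L$, contradicting Proposition~\ref{boundedness:prop} unless $P$ is bounded, in which case one argues directly), and $P \setminus \intr(L)$ plus any recession direction of $P$ stays in $P$; closing up, $\rec(R_L(P)) \supseteq \rec(P)$. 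Hence $R_L(P) \supseteq R_L(S) + \rec(P)$. (I should handle the degenerate case $P \subseteq \intr(L)$ separately — there $R_L(P) = \emptyset = R_L(S)$ and the statement holds trivially, or one simply assumes this does not occur.)

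\medskip

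\textbf{Step 2: the inclusion $R_L(P) \subseteq R_L(S) + \rec(P)$.} This is the substantive part. Since $P$ is line-free, $P = \conv(\vx(P)) + \rec(P)$, so it suffices to show each point of $P\setminus\intr(L)$ lies in $R_L(S)+\rec(P)$, and since the right-hand side is convex, it suffices to do this for every \emph{extreme point} of $P \setminus \intr(L)$ — more precisely, $R_L(P)$ is the convex hull of $P\setminus\intr L$, and one shows every point of $P\setminus\intr(L)$ is in the convex set $R_L(S)+\rec(P)$. Take $x \in P \setminus \intr(L)$. Write $x = v + r$ with $v \in \conv(\vx(P))$ and $r \in \rec(P)$; decompose $v = \sum \lambda_j v_j$ over vertices. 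The key geometric idea: I want to "push" $x$ back along edges of $P$ until it reaches $S$, while staying outside $\intr(L)$, paying only recession directions. Concretely, consider the face structure: let $F$ be the minimal face of $P$ containing $x$. If $\dim F \le 1$ then $x$ already lies on a vertex or edge of $P$, and a vertex $\{v\}=R_L(\{v\})\subseteq R_L(S)$ while an edge is part of $S$, so $x \in R_L(S) \subseteq R_L(S)+\rec(P)$ (using $o\in\rec(P)$), unless $x\in\intr(L)$ which is excluded. If $\dim F \ge 2$, pick a direction $u$ in the lineality-free part so that the line $x + \real u$ meets $\relint(F)$; moving from $x$ in the two opposite directions along this line within $F$, by Proposition~\ref{boundedness:prop} applied to the two opposite rays, the portion of this line inside $\intr(L)$ is bounded, so I can reach a point $x'$ on $\relbd(F)$ (a proper face of lower dimension) with $x' \notin \intr(L)$ and with $x - x'$ or $x' - x$... — here one must be careful that the move does not cost a direction outside $\rec(P)$. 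The clean way: induct on $\dim F$. On an edge ($\dim F = 1$) we are done. For a higher-dimensional face $F$, for $x\in F\setminus\intr L$ choose any relative interior direction and travel toward the relative boundary; since $F = \conv(\vx(F)) + \rec(F)$ and $\rec(F)\subseteq\rec(P)$, and since $\intr(L)$ is bounded along every ray not inside it, one writes $x$ as a convex combination of finitely many points each lying in a proper face of $F$ and outside $\intr(L)$, plus a vector in $\rec(F)\subseteq\rec(P)$; convexity of $R_L(S)+\rec(P)$ then reduces to the lower-dimensional faces, and induction closes the argument.

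\medskip

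\textbf{Step 3: polyhedrality.} Once \eqref{equality:with:edges} is proved, polyhedrality of $R_L(P)$ follows because $S$, being the union of finitely many edges (segments or halflines) of $P$, satisfies: for each edge $e$, $e \setminus \intr(L)$ is a union of at most two segments/halflines (by Proposition~\ref{boundedness:prop}, the part of $e$ inside $\intr(L)$, if $e\not\subseteq\intr L$, is a bounded relatively open interval, hence its complement in $e$ is at most two pieces), so $S\setminus\intr(L)$ is a finite union of finitely many segments and halflines; hence $R_L(S) = \conv(S\setminus\intr(L))$ is the convex hull of finitely many points and finitely many rays, i.e.\ a polyhedron, and adding the polyhedral cone $\rec(P)$ keeps it polyhedral.

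\medskip

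\textbf{Main obstacle.} The delicate point is Step 2, specifically ensuring that when I "retract" a point $x \in P\setminus\intr(L)$ toward the union of edges $S$, the displacement I incur is genuinely a recession direction of $P$ (so lands in the $+\rec(P)$ summand) and the intermediate points stay outside $\intr(L)$ — the latter is exactly what Proposition~\ref{boundedness:prop} is designed to guarantee along each ray, but combining this over a dimension-reducing induction on faces, while keeping the bookkeeping of recession directions honest, is where the real care is needed. The line-freeness of $P$ is essential here so that the induction on face dimension bottoms out at honest edges rather than at lines.
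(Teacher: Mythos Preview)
Your overall architecture matches the paper's: establish \eqref{equality:with:edges} via two inclusions, then read off polyhedrality from the edge description. Step~3 is fine and essentially the paper's argument. Step~1 is morally right but the justification that $\rec(R_L(P)) \supseteq \rec(P)$ is incomplete: from ``$(P\setminus\intr(L))+r \subseteq P$'' you cannot conclude the translate stays outside $\intr(L)$, so you have not exhibited $r$ as a recession direction of $R_L(P)$. The paper fixes this with the very ray trick you invoke elsewhere: for $x\in S\setminus\intr(L)$ and $u\in\rec(P)\setminus\{o\}$, Proposition~\ref{boundedness:prop} makes $\gamma=\{x+\lambda u:\lambda\ge 0\}$ meet $\intr(L)$ in a bounded set, whence $x+u\in\gamma=R_L(\gamma)\subseteq R_L(P)$.

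The substantive gap is Step~2. Your induction on the dimension of the minimal face $F$ containing $x$ is a reasonable scaffold, but the inductive step (``choose any relative interior direction and travel toward the relative boundary \ldots\ one writes $x$ as a convex combination of points in a proper face of $F$ outside $\intr(L)$, plus a vector in $\rec(F)$'') is asserted, not proved, and you correctly flag it as the main obstacle without resolving it. Moving from $x$ in an arbitrary direction can carry you \emph{into} $\intr(L)$ before you reach $\relbd(F)$, and there is no evident mechanism forcing the displacement to lie in $\rec(F)$.

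The paper replaces the whole induction by a single separation step: since $x\notin\intr(L)$, there is a hyperplane $H$ through $x$ disjoint from $\intr(L)$. Then
\[
x\in P\cap H=\conv\bigl(\vx(P\cap H)\bigr)+\rec(P\cap H),
\]
with $\vx(P\cap H)\subseteq S\cap H\subseteq S\setminus\intr(L)$ (a vertex of $P\cap H$ must lie on an edge of $P$, using that $P$ is line-free) and $\rec(P\cap H)\subseteq\rec(P)$. This one observation does all the work your induction was meant to do; if you insist on the inductive framework, the only clean way to carry out the reduction to proper faces is to intersect $F$ with such a separating hyperplane, at which point the induction becomes superfluous.
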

\begin{proof}
	For proving \eqref{equality:with:edges} it suffices to verify the following inclusions:
	\begin{align}
		P \setminus \intr(L) & \subseteq R_L(S) + \rec(P), \label{incl:there} \\
		S \setminus \intr(L) + \rec(P) & \subseteq R_L(P). \label{incl:back}
	\end{align}
	Consider an arbitrary $x \in P \setminus \intr(L)$. By separation theorems, there exists a hyperplane $H$ containing $x$ and disjoint with $\intr(L)$. We have $x \in P \cap H = \conv(\vx(P \cap H)) + \rec(P \cap H)$, where $\vx(P \cap H) \subseteq S \cap H \subseteq S \setminus \intr(L)$ and $\rec(P \cap H) \subseteq \rec(P)$. This yields \eqref{incl:there}.  For showing \eqref{incl:back} we consider $x \in S \setminus \intr(L)$ and $u \in \rec(P)$ and derive $x+ u \in R_L(P)$. For $u=o$, one obviously has $x+u \in R_L(P)$. Let $u \ne o$. By Proposition~\ref{boundedness:prop}, the intersection of the ray $\gamma := \setcond{x + \lambda u }{ \lambda \ge 0}$ with $\intr(L)$ is bounded. Hence $x+u \in \gamma = R_L(\gamma) \subseteq R_L(P)$. This yields \eqref{incl:back}. 

	It remains to show that $R_L(P)$ is a polyhedron. Let $E$ be the set of all edges of $P$. If $e \in E$ and $e \setminus \intr(L)$ is bounded and nonempty, then $R_L(e) = \conv (\{u,v\})$ for some $u, v \in e \setminus \intr(L)$. If $e \in E$ and $e \setminus \intr(L)$ is unbounded, then $R_L(e) \subseteq w+ \rec(e)$ for some $w \in e \setminus \intr(L)$. Let $X$ be the finite subset of $S \setminus \intr(L)$ consisting of all $u, v$ and $w$ associated to edges $e \in E$ as above. We have 
	\begin{align*}
		R_L(S) + \rec(P) & = \conv\left(\bigcup_{e \in E} R_L(e)\right) + \rec(P) \subseteq \conv(X) + \rec(P) \subseteq R_L(S) + \rec(P).
	\end{align*}
	Thus, $\conv(X) + \rec(P) = R_L(S) + \rec(P) = R_L(P)$. Since $X$ is finite and $\rec(P)$ is a polyhedral cone, we deduce that $R_L(P)$ is a polyhedron.
\end{proof}

\begin{lemma} \label{integrality:lem}
	Let $L$ be a $d$-dimensional rational polyhedron in $\real^d$ such that $m=m(L) < +\infty$. Let $\gamma := \setcond{p+ \lambda u}{\lambda \ge 0}$ be a ray emanating from a point $p \in \intr(L)$ and having direction $u \in \integer^d \setminus \{o\}$. Assume that $\bd(L)$ and $\gamma$ intersect. Assume also that $p \in \rational^d$, and let $h \in \natur$ be such that $h p \in \integer^d$. Then the (unique) intersection point $q$ of $\bd(L)$ and $\gamma$ can be given by $q=p+\lambda u$, where $\lambda>0$ satisfies
	\[
		\frac{(h m)!}{\lambda} \in \natur.
	\]
\end{lemma}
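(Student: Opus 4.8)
The plan is to isolate a single facet-defining inequality of $L$ that is active at $q$, to read off $\lambda$ from that one linear equation, and then to control the numerator and the denominator of the resulting fraction using the integrality built into the representation~\eqref{L:and:k:def}. First I would fix a convenient description of $L$: since $m(L)<+\infty$, the polyhedron $L$ is not $\real^d$ and has a linear recession cone, so it admits a representation as in~\eqref{L:and:k:def} with data as in~\eqref{integral:params} and $m=m(L)$. Splitting each double inequality into two, every defining inequality of $L$ has the form $\sprod{a}{x}\le\beta$ with $a\in\integer^d\setminus\{o\}$ and $\beta\in\integer$ (namely $(a,\beta)=(a_i,b_i)$ or $(a,\beta)=(-a_i,m-b_i)$), and it is paired with another inequality of the system which, rewritten, reads $\sprod{a}{x}\ge\beta-m$. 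Consequently $\beta-m\le\sprod{a}{y}\le\beta$ for every $y\in L$, with both inequalities strict on the interior, so
\[
	0<\beta-\sprod{a}{y}<m\qquad\text{for every } y\in\intr(L).
\]

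Next I would locate $q$. Since $\bd(L)$ meets $\gamma$, the ray $\gamma$ is not contained in $\intr(L)$, so by Proposition~\ref{boundedness:prop} the set $\intr(L)\cap\gamma$ is bounded; being a relatively open subinterval of $\gamma$ containing $p$, it equals $\setcond{p+tu}{0\le t<\lambda}$ for some $\lambda\in(0,+\infty)$. Then $q:=p+\lambda u$ lies in $L$ by closedness, hence $q\in\bd(L)$, and by convexity $q$ is the unique point of $\bd(L)\cap\gamma$. Pick a defining inequality $\sprod{a}{x}\le\beta$ of $L$ (in the split form above) that is active at $q$, so $\sprod{a}{q}=\beta$ whereas $\sprod{a}{p}<\beta$. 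Substituting $q=p+\lambda u$ gives $\lambda\,\sprod{a}{u}=\beta-\sprod{a}{p}>0$; since $\lambda>0$, the integer $\sprod{a}{u}$ is positive.

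It remains to do the arithmetic. Set $c:=h\bigl(\beta-\sprod{a}{p}\bigr)=h\beta-\sprod{a}{hp}$. By $\beta\in\integer$, $a\in\integer^d$ and $hp\in\integer^d$ this is an integer, and the slack bound above gives $0<c<hm$. From $\lambda\,\sprod{a}{u}=c/h$ we get $\lambda=\dfrac{c}{h\,\sprod{a}{u}}$, hence
\[
	\frac{(hm)!}{\lambda}=\frac{(hm)!}{c}\cdot h\cdot\sprod{a}{u}.
\]
Since $c$ is one of the factors $1,\dots,hm$ of $(hm)!$, the quotient $(hm)!/c$ lies in $\natur$; as $h\in\natur$ and $\sprod{a}{u}\in\natur$, the right-hand side is a product of natural numbers, so $(hm)!/\lambda\in\natur$, as required.

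I do not anticipate a genuine obstacle; the proof is short once the representation is set up correctly. The one point that really matters is the strict upper bound $\beta-\sprod{a}{p}<m$, which is precisely why one must work with the width-$m(L)$ representation~\eqref{L:and:k:def} and its paired inequalities, rather than with an arbitrary rational description of $L$; everything else is routine bookkeeping on integrality of scalar products. (The borderline value $hm=1$ cannot actually occur under the hypotheses --- it would force a lattice point in the interior of a polyhedron of width one --- and is in any case absorbed by the fact that no integer $c$ with $0<c<1$ exists.)
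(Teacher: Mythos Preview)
Your proof is correct and follows essentially the same route as the paper's: pick a representation~\eqref{L:and:k:def}--\eqref{integral:params} with $m=m(L)$, choose an inequality active at $q$, solve for $\lambda$, and bound the integer numerator by $hm$ so that it divides $(hm)!$. You are merely a little more explicit than the paper in splitting the two-sided constraints, in verifying that $\sprod{a}{u}$ is a positive integer, and in disposing of the degenerate case $hm=1$.
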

\begin{proof}
	Let $L$ be given by \eqref{L:and:k:def}--\eqref{integral:params}. Since $q \in \bd(L)$, there exists $i \in [n]$ such that $\sprod{q}{a_i}=b_i-m$ or $\sprod{q}{a_i}=b_i$. We assume $\sprod{q}{a_i}=b_i$ (the case $\sprod{q}{a_i}=b_i-m$ being similar). Since $p \in \intr(L)$, one has $b_i-m < \sprod{a_i}{p} < b_i$. It follows that $\sprod{a_i}{u} \ne 0$ and we can express $\lambda$ by 
	\[
		\lambda = \frac{\sprod{a_i}{q} - \sprod{a_i}{p}}{\sprod{a_i}{u}} = \frac{b_i - \sprod{a_i}{p}}{\sprod{a_i}{u}} = \frac{h b_i - \sprod{a_i}{h p}}{h \sprod{a_i}{u}},
	\]
	where $h b_i - \sprod{a_i}{hp}$ is a natural number not larger than $h m$. It follows $\frac{ (h m)!}{\lambda} \in \natur$.
\end{proof}

\begin{proof}[Proof of Theorem~\ref{finite:generation:thm}] First we consider the case that $P$ is line-free.  We remark that if $\cL$ is represented as a finite union, say $\cL=\cL_1 \cup \cdots \cup \cL_t$ with $t \in \natur$, then it suffices to verify the assertion for each subfamily $\cL_i$ with $i \in [t]$ in place of $\cL$.  Let $E$ be the set of all edges of $P$. Given $L \in \cL$, we decompose $E$ into the following three sets:
\begin{align}
	E^+ &= \setcond{e \in E}{R_L(e) = e} & &\text{(the set of edges `preserved' by $L$),} \label{E+} \\
	E^- &= \setcond{e \in E}{R_L(e) =  \emptyset} & &\text{(the set of edges `removed' by $L$),} \label{E-} \\
	E^{\pm} &= \setcond{e \in E}{\emptyset \ne R_L(e) \ne e} & &\text{(the set of edges `bisected' by $L$).}\label{E+-}
\end{align}
In view of the remark on representation of $\cL$ as a finite union, without loss of generality we can assume that $E^+, E^-$ and $E^\pm$ do not depend on the choice of $L \in \cL$. That is, we assume that $E$ can be represented as a union of three sets $E^+, E^-$ and $E^\pm$ such that \eqref{E+}, \eqref{E-} and \eqref{E+-} hold for every $L \in \cL$. The degenerate case $E^\pm =\emptyset$ can be handled easily using Lemma~\ref{descr:R}. Assume $E^\pm \ne \emptyset$. Let $E^\pm = \{e_1,\ldots,e_s\}$, where $s \in \natur$. Let $i \in [s]$. By  Proposition~\ref{boundedness:prop}, the set $\intr(L) \cap e_i$ is bounded. Thus, taking into account that $e_i \in E^\pm$, we see that precisely one endpoint $p_i$ of $e_i$ belongs to $\intr(L)$. Let us choose any vector $u_i \in (\integer^d \cap \rec(e_i)) \setminus \{o\}$ if $e_i$ is unbounded and a vector $u_i \in \integer^d \setminus \{o\}$ such that $e_i \subseteq \conv (\{ p_i,p_i+u_i \})$ if $e_i$ is bounded. For every $L \in \cL$, the intersection point  of $e_i$ and $\bd(L)$ can be given by 
$p_i + \lambda_{i,L} u_i$,
where $\lambda_{i,L} > 0$. Let us fix $h \in \natur$ such that all vertices of $h P$ are integral points. By Lemma~\ref{integrality:lem}, for every $L \in \cL$ one has 
\[
	y_{L} := (h m)! \cdot  \left(\frac{1}{\lambda_{1,L}},\ldots,\frac{1}{\lambda_{s,L}} \right)^\top \in \natur^s.
\]
By the Gordan-Dickson lemma one can choose a finite subfamily $\cL'$ of $\cL$ such that for every $L \in \cL$ there exists $L'\in \cL'$ with $y_{L'} \le y_L$. For the  condition $y_{L'} \le y_L$ we have the following chain of equivalences:
\begin{align*}
	y_{L'} & \le y_L & &\Longleftrightarrow &  &\lambda_{i,L'} \ge \lambda_{i,L} & &\forall i \in [s] \\
	& & &\Longleftrightarrow & & R_{L'}(e)  \subseteq R_L(e) & & \forall e \in E^\pm \\
	& & &\Longleftrightarrow & & R_{L'}(P) \subseteq R_L(P). & &
\end{align*}
The last equivalence in the above chain follows from Lemma~\ref{descr:R}. Thus, $\cL'$ is a subfamily of $\cL$ with the desired properties.

Now, assume $P$ is not line-free, that is, the linear space $X:=\rec(P) \cap (-\rec(P))$ is strictly larger than $\{o\}$. If $L \in \cL$ and $X \not\subseteq \rec(L)$, one can choose a line $\gamma$ through $o$ which is contained in $X$ but not in $\rec(L)$. By the choice of $\gamma$, for every $x \in P$, the set $(x+\gamma) \cap \intr(L)$ is bounded. Hence $x \in x+\gamma = R_L(x+\gamma) \subseteq R_L(P)$. This shows the equality $P=R_L(P)$ for every $L \in \cL$ with $X \not\subseteq \rec(L)$. Thus, without loss of generality, we can assume $X \subseteq \rec(L)$ for every $L \in \cL$. The linear space $X$ is spanned by vectors from $\integer^d$. By this we can choose a basis $z_1,\ldots,z_k$ of the lattice $X \cap \integer^d$, where $k$ is the dimension of the linear space $X$. We extend $z_1,\ldots,z_k$ to a basis $z_1,\ldots,z_d$ of the lattice $\integer^d$. After a change of coordinates which transforms the basis $z_1,\ldots,z_d$ to the standard basis of $\real^d$ we can assume that $X= \real^k \times \{o'\}$, where $o'$ is the origin of $\real^{d-k}$. Then $P$ can be given by $P = \real^k \times P'$, where $P'$ is a line-free rational polyhedron in $\real^{d-k}$. Furthermore, every $L \in \cL$ can be given by $L =  \real^k \times L' $ for an appropriate rational polyhedron $L'$, which satisfies $m(L) = m(L')$. Taking into account the trivial equality $R_L(P) = \real^k \times R_{L'}(P')$, we see that, in the case that $P$ is not line-free, the assertion follows from the assertion for the case of line-free $P$.
\end{proof}

\begin{remark}
The main theorem from \cite[Theorem~4.3]{MR2676765} asserts that, for $P$ and $\cL$ as in Theorem~\ref{finite:generation:thm} and under the additional assumption that the elements of $\cL$ are maximal lattice-free sets, the set $\bigcap_{L \in \cL} R_L(P)$ is a rational polyhedron. Thus, in Theorem~\ref{finite:generation:thm} we both relax the assumptions and strengthen the assertion of the main result from \cite{MR2676765}. The motivation to relax the assumption is provided by the fact that Theorem~\ref{finite:generation:thm} can be used in the proof of the result on finite convergence of the so-called \emph{integral lattice-free closures} which was given in  \cite[Theorem~4]{DelPiaWeismantel10}. The authors of \cite[\S3]{DelPiaWeismantel10} indicate that they need to use a modification of the main result of \cite{MR2676765} with  weaker assumptions. On the other hand, the strengthened assertion gives a more detailed information on the family $\setcond{R_L(P)}{L \in \cL}$. 
\end{remark}

\begin{proof}[Proof of Corollary~\ref{finite:generation:cor}]
	The corollary is a straightforward consequence of Theorem~\ref{finite:generation:thm} and Lemma~\ref{descr:R}.
\end{proof}

\section{Proofs of Theorem~\ref{simple-reason-theorem} and Corollary~\ref{simple-reason-corollary}} \label{simple-reason-section}

We shall use the following description of maximal lattice-free sets given by Lov\'asz.

\begin{theorem} {\upshape \cite[\S3]{MR1114315}} \label{lovasz-char} Let $L$ be a lattice-free set in $\real^d$. Then the following statements hold.
\begin{enumerate}[I.]
	\item The set $L$ is maximal lattice-free if and only if $L$ is a polyhedron and the relative interior of each facet of $L$ contains a point of $\integer^d$.
	\item If $L$ is maximal lattice-free and unbounded, then $L$ is $\integer^d$-equivalent to $L'\times \real^k$, where $0 < k < d$ and $L'$ is a $(d-k)$-dimensional maximal lattice-free polytope in $\real^{d-k}$.
\end{enumerate}
\end{theorem}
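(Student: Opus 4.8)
For Part~I I would dispose of the easy implication first. Assume $L$ is a polyhedron, lattice-free, with a lattice point in the relative interior of each facet, and let us show $L$ is maximal. If not, some lattice-free convex set $M$ properly contains $L$, so there is $x \in M \setminus L$. Writing $L = \setcond{y}{\sprod{a_i}{y} \le b_i, \ i \in [n]}$, the point $x$ violates some facet inequality, say $\sprod{a_j}{x} > b_j$, and the corresponding facet $F_j$ carries a lattice point $z \in \relint(F_j) \cap \integer^d$. Near $z$ the set $L$ locally coincides with the halfspace $\setcond{y}{\sprod{a_j}{y} \le b_j}$, so a small half-ball $B \cap \setcond{y}{\sprod{a_j}{y} \le b_j}$ centred at $z$ lies in $L \subseteq M$; since $M$ is convex and also contains $x$ strictly on the other side of the hyperplane $\sprod{a_j}{y} = b_j$, the convex hull of this half-ball with $x$ fills a full neighbourhood of $z$. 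Hence $z \in \intr(M) \cap \integer^d$, contradicting that $M$ is lattice-free. This step is short and clean.

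The substantial implication of Part~I is that a maximal lattice-free $L$ is a polyhedron whose facets carry relative-interior lattice points. The relative-interior statement, once polyhedrality is known, follows by a relaxation argument: if the relative interior of some facet (with inequality $\sprod{a}{y} \le b$) contained no lattice point, then relaxing only that inequality to $\sprod{a}{y} \le b + \varepsilon$ enlarges $L$ while, for small $\varepsilon > 0$, the only boundary points newly brought into the interior accumulate on that facet's relative interior and so stay free of $\integer^d$, contradicting maximality. The genuine difficulty, and the main obstacle of the whole theorem, is proving that $L$ is a polyhedron at all. The plan here is to show that maximality forces the boundary of $L$ to be supported by lattice points: at every boundary point a supporting halfspace can be chosen whose bounding hyperplane meets $\integer^d \cap \bd(L)$, for otherwise the supporting hyperplane could be translated outward to enlarge $L$ without covering a lattice point. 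One then argues that only finitely many distinct such supporting halfspaces occur, using a Dirichlet/pigeonhole argument on the blocking lattice points together with the full-dimensionality of $L$, which exhibits $L$ as the intersection of finitely many halfspaces. Controlling this finiteness is the delicate point where the bulk of the work lies.

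For Part~II I would start from Part~I: $L$ is a polyhedron with relative-interior lattice points on its facets, and it is unbounded, so $\rec(L) \ne \{o\}$. The crux is to show that $\rec(L)$ is a rational linear subspace $W = \lineal(L)$. Rationality of $\rec(L) = \setcond{u}{\sprod{a_i}{u} \le 0 \ \forall i}$ can be extracted from the lattice points lying on $\bd(L)$, which force the supporting data of $L$ to be rational. For the subspace property one uses maximality: if $u \in \rec(L)$ but $-u \notin \rec(L)$, then $L$ is properly contained in $L + \real u$; one shows that $L + \real u$ is again lattice-free, so maximality forces $L + \real u = L$ and hence $-u \in \rec(L)$. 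Verifying this lattice-freeness is the technical heart of Part~II. Writing $k := \dim W$, one has $0 < k < d$, since $L$ is unbounded with $\rec(L) = W$ (giving $k \ge 1$) while $L \ne \real^d$ is lattice-free (giving $k < d$). Finally, choosing a basis of the lattice $W \cap \integer^d$ and extending it to a basis of $\integer^d$ yields a unimodular change of coordinates taking $W$ to $\real^k \times \{o'\}$; under it $L$ becomes $\real^k \times L'$ with $L'$ a polytope, bounded because all recession directions now lie in the $\real^k$ factor. That $L'$ is a $(d-k)$-dimensional maximal lattice-free polytope follows because lattice-freeness and maximality pass between $L$ and $L'$ through this cylinder decomposition, completing the proof.

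I expect the polyhedrality of maximal lattice-free sets inside Part~I to be the principal obstacle; granting it, the relative-interior facet condition, the recession-cone analysis of Part~II, and the unimodular splitting are comparatively routine.
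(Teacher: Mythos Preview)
The paper does not prove Theorem~\ref{lovasz-char} at all: it is stated as a cited result of Lov\'asz, and immediately after the statement the paper simply writes ``Proofs of Theorem~\ref{lovasz-char} can be found in \cite[Theorem~1]{averkov2011proof} and \cite[Theorem~2.2]{MR2724071}.'' So there is no in-paper proof to compare your proposal against; the theorem functions here as imported background for the proofs of Theorem~\ref{simple-reason-theorem} and Corollary~\ref{simple-reason-corollary}.

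As for the content of your sketch: the outline is broadly the standard one and matches what one finds in the references the paper cites. Your easy implication in Part~I is clean. You correctly flag polyhedrality as the crux of the hard implication, but the ``Dirichlet/pigeonhole'' remark is too vague to count as a plan; the actual arguments (e.g.\ in \cite{averkov2011proof} or \cite{MR2724071}) proceed via a Minkowski/flatness-type step showing a maximal lattice-free set has bounded lattice width, which then yields finitely many facet directions. In Part~II your scheme is right in spirit, but the order of the two claims about $\rec(L)$ matters: one typically first shows $\rec(L)$ is a linear space (using that the facets carry lattice points, so any recession direction of $L$ is parallel to every facet and hence $-u\in\rec(L)$ as well), and rationality then follows because each facet-defining hyperplane passes through a lattice point and has rational normal once polyhedrality with bounded lattice width is in hand. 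Your alternative route via ``$L+\real u$ is lattice-free'' is plausible but is exactly the step you label as the technical heart without indicating how to carry it out.
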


Proofs of Theorem~\ref{lovasz-char} can be found in \cite[Theorem~1]{averkov2011proof} and \cite[Theorem~2.2]{MR2724071}.

\begin{lemma} \label{vol:versus:maxfw}
	Let $L$ be a $d$-dimensional rational polytope in $\real^d$. Then $\vol(L) \le m(L)^d.$
\end{lemma}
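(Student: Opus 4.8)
The plan is to trap $L$ between a full-dimensional parallelepiped cut out by $d$ of its defining inequalities and then to estimate the volume of that parallelepiped by a determinant. Since $L$ is a polytope, it is bounded, so $\rec(L)=\{o\}$ is a linear space and $L\ne\real^d$; hence $m:=m(L)$ is finite and there is a representation of $L$ of the form \eqref{L:and:k:def}--\eqref{integral:params} with this value of $m$. First I would observe that the normal vectors $a_1,\ldots,a_n$ span $\real^d$: if they spanned only a proper subspace, then $L$ would be invariant under every translation in a direction orthogonal to that subspace, hence unbounded, a contradiction. Relabelling, I may assume that $a_1,\ldots,a_d$ are linearly independent.

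Next, dropping all but the first $d$ inequalities gives
\[
	L\subseteq Q:=\setcond{x\in\real^d}{b_i-m\le\sprod{a_i}{x}\le b_i\quad\forall i\in[d]}.
\]
Let $A$ be the $d\times d$ integral matrix with rows $a_1^\top,\ldots,a_d^\top$; by the choice of the $a_i$ it is nonsingular. The linear map $x\mapsto Ax$ carries $Q$ onto the box $\prod_{i=1}^d[b_i-m,b_i]$ of volume $m^d$, and it scales volumes by the factor $\card{\det A}$, so $\vol(Q)=m^d/\card{\det A}$. Since $A$ is integral with $\det A\ne 0$, we have $\card{\det A}\ge 1$, and therefore $\vol(L)\le\vol(Q)\le m^d$, as asserted.

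I do not expect a genuine obstacle here; the argument is elementary. The only two points deserving explicit justification are that the defining normals span $\real^d$ (which is exactly what guarantees the existence of a full-dimensional sub-parallelepiped $Q$) and the change-of-variables identity $\vol(Q)=m^d/\card{\det A}$, both of which are routine.
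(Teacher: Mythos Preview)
Your argument is correct and follows essentially the same route as the paper's proof: pick $d$ linearly independent integral normals from the representation \eqref{L:and:k:def}--\eqref{integral:params}, enclose $L$ in the parallelepiped $A^{-1}(b-[0,m]^d)$, and use $|\det A|\ge 1$. The only cosmetic difference is that you spell out why the normals must span $\real^d$, which the paper leaves implicit.
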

\begin{proof}
	Assume $m=m(L) < +\infty$, since otherwise the assertion is trivial. Let $L$ be given by \eqref{L:and:k:def}--\eqref{integral:params}. Since $L$ is bounded, there exist indices $1 \le j_1,\ldots,j_d \le n$ such that $a_{j_1},\ldots,a_{j_d}$ is a basis of $\real^d$. Let $A$ be the matrix with rows $a_{j_1}^\top,\ldots,a_{j_d}^\top$ (in this sequence) and let $b:=(b_{j_1},\ldots,b_{j_d})^\top$. We have
	\[
		L \subseteq \setcond{x \in \real^d}{b - A x \in [0,m]^d} = A^{-1}(b - [0,m]^d),
	\]
	where the matrix $A$ is integral. Hence $\vol(L) \le \frac{1}{|\det A|} m^d \le m^d$.
\end{proof}

\newcommand{\eps}{\varepsilon}

\begin{lemma} \label{circumscribed:finite:lem}
	Let $P$ be a $d$-dimensional integral polytope and let $m \in \natur$. Let $\cL$ be the family of all $d$-dimensional rational polytopes $L$ in $\real^d$ such that $L$ is a maximal lattice-free set, $\conv(L \cap \integer^d) = P$ and $m(L) = m$. Then $\cL$ is finite.
\end{lemma}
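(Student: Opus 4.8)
The plan is to show that every $L \in \cL$ lies in a bounded region depending only on $P$ and $m$, and then argue that the combinatorial and metric data determining such an $L$ ranges over a finite set. First I would observe that, since $\conv(L \cap \integer^d) = P$, the polytope $L$ contains $P$, so in particular $L$ contains all vertices of $P$; hence $L$ is nonempty and $d$-dimensional with a fixed ``core'' $P \subseteq L$. By Part~I of Theorem~\ref{lovasz-char}, $L$ is a polytope whose every facet has an integral point in its relative interior. The key geometric input is a volume bound: by Lemma~\ref{vol:versus:maxfw} we have $\vol(L) \le m^d$. Combined with $L \supseteq P$ and the fact that $L$ is lattice-free, a bounded-volume convex body containing a fixed full-dimensional polytope cannot be ``long and thin'' in an uncontrolled way — I would make this precise to conclude that there is a radius $\rho = \rho(P,m)$ with $L \subseteq \rho B^d$ (here one uses that $L$ is lattice-free, so it is ``flat'' only in directions controlled by the lattice, while its volume is bounded; alternatively one bounds the inradius via the volume and the width via lattice-freeness and a result of the Flatness-theorem type, but the cleanest route is: $L$ contains the fixed polytope $P$ of positive volume, and any facet-defining halfspace of $L$ cutting deep past $P$ would force $\vol(L)$ to be large — I would turn this into the bound $L \subseteq \rho B^d$).

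Next, with all $L \in \cL$ confined to the fixed ball $\rho B^d$, I would control the facet-defining inequalities. Write a generic $L \in \cL$ as in \eqref{L:and:k:def}–\eqref{integral:params} with max-facet-width exactly $m$; normalize so that each facet hyperplane of $L$ is $\setcond{x}{\sprod{a_i}{x} = b_i}$ with $a_i \in \integer^d \setminus \{o\}$ primitive and $b_i \in \integer$. By Part~I each facet hyperplane meets $\integer^d$, and since the facet lies within $\rho B^d$, that integral point has norm at most $\rho$; moreover the opposite bounding value $b_i - m$ (or $b_i$) is pinned down once we know the facet passes near the ball of radius $\rho$. Concretely: for each facet, $|\sprod{a_i}{x}| \le \|a_i\| \rho$ for $x \in L$, and because the facet contains a lattice point of norm $\le \rho$, we get $|b_i| \le \|a_i\|\rho$; and the width-$m$ condition together with $L$ being inscribed forces $\|a_i\|$ itself to be bounded, since a facet of a convex body inside $\rho B^d$ that is ``close'' (within the slab of width $m$ measured by $a_i$) to many lattice hyperplanes cannot have arbitrarily large primitive normal while keeping the whole body lattice-free and of the prescribed width. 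The upshot is: the normals $a_i$ lie in the finite set $\integer^d \cap \rho' B^d$ and the right-hand sides $b_i$ lie in the finite set $\integer \cap [-\rho'', \rho'']$, for constants $\rho', \rho''$ depending only on $P$ and $m$. Since each $L$ is the intersection of at most some bounded number of such slabs (the number of facets of a polytope inscribed in $\rho B^d$ with $\vol \ge \vol(P)$ and width $m$ is itself bounded), there are only finitely many possibilities for the defining system, hence $\cL$ is finite.

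The main obstacle, and the step that needs the most care, is the passage from the volume bound $\vol(L) \le m^d$ (plus lattice-freeness and $L \supseteq P$) to the uniform containment $L \subseteq \rho B^d$ and thence to the bound on the primitive normals $\|a_i\|$. Volume bounds alone do not preclude long thin polytopes; the lattice-freeness is what rescues us, via the fact that a lattice-free convex body has bounded width in \emph{some} direction, but here we need the stronger statement that, once we also fix that it contains the full-dimensional polytope $P$ and has every facet meeting $\integer^d$, it is bounded \emph{in every direction}. I would handle this by noting that if $L$ extended a distance $t$ beyond $P$ in some direction, then, since $P$ is full-dimensional, $\conv(P \cup \{\text{that far point}\})$ has volume growing linearly in $t$, contradicting $\vol(L) \le m^d$ once $t$ exceeds a threshold determined by $P$ and $m$; this gives $\rho$, after which the bound on $\|a_i\|$ follows because a primitive integral vector $a_i$ with $\|a_i\|$ large would make the slab $\setcond{x}{b_i - m \le \sprod{a_i}{x} \le b_i}$ so thin that it could not contain the fixed full-dimensional $P$. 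I expect everything else — finiteness of the facet count, finiteness of the candidate normals and offsets — to be routine counting once the uniform ball containment is established.
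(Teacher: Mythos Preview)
Your argument is correct, and the endgame---bounding the primitive normals $a_i$ by the slab $\{x : b_i - m \le \sprod{a_i}{x} \le b_i\}$ having to contain the full-dimensional set $P$, and bounding $b_i$ via an integral point on each facet hyperplane---matches the paper's. The genuine difference is the detour you take to reach that endgame. You invest effort in establishing the uniform containment $L \subseteq \rho(P,m)\, B^d$ via the volume bound of Lemma~\ref{vol:versus:maxfw}, and you flag this as ``the main obstacle''. The paper never proves (and never needs) such a containment: it simply fixes $\rho$ so that $P \subseteq \rho\, B^d$, observes that by hypothesis every lattice point of $L$ lies in $P$, and hence the integral point guaranteed on each facet by Theorem~\ref{lovasz-char} already has norm at most $\rho$. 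This gives $|b_i| \le \rho\|a_i\| + m$ directly, with $\rho$ depending only on $P$. In other words, the very hypothesis $\conv(L \cap \integer^d)=P$ forces $L \cap \integer^d \subseteq P$, so the facet lattice point you locate inside $\rho(P,m)\,B^d$ is in fact inside the much smaller ball around $P$---your volume/cone argument, while valid, is unnecessary even within your own framework. The paper's route is shorter and avoids invoking Lemma~\ref{vol:versus:maxfw} here at all; your route has the minor advantage of producing an explicit outer ball for $L$ as a byproduct.
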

\begin{proof}
	We shall use the notions `distance' and `ball' in the standard Euclidean sense. By $\| \dotvar \|$ we denote the Euclidean norm of $\real^d$. Let $\delta > 0$ be the least possible distance between a pair of parallel hyperplanes $H^+$ and $H^-$ in $\real^d$ satisfying $P \subseteq \conv(H^+ \cup H^-)$. Let us choose $\rho>0$ such that $P$ is contained in the (closed) ball of radius $\rho$ with center at $o$. We consider an arbitrary $L \in \cL$ and assume that $L$ is given by \eqref{L:and:k:def}--\eqref{integral:params}. For $i \in [n]$ consider the hyperplanes
	\begin{align*}
		H^+_i &:= \setcond{x \in \real^d}{\sprod{a_i}{x} = b_i} & &\text{and} & H^-_i &:= \setcond{x \in \real^d}{\sprod{a_i}{x} = b_i-m}.
	\end{align*}
	If, for some $i \in [n]$, neither $L \cap H_i^+$ nor $L \cap H_i^-$ is a facet of $L$, then the corresponding inequalities $b_i-m \le \sprod{a_i}{x} \le b_i$ in \eqref{L:and:k:def} are redundant (that is, they can be dropped out without changing $L$). Thus, without loss of generality, we can assume that, for every $i \in [n]$, the set $L \cap H_i^+$ or $L \cap H_i^-$ is a facet of $L$. Taking into account this assumption and Theorem~\ref{lovasz-char}, we see that the intersection of $L \cap H_i^+$ or $L \cap H_i^-$ contains an integral point. Since all integral points of $L$ lie in $P$, we deduce that $L \cap H_i^+$ or $L \cap H_i^-$ contains a point of $P$.

	One has $P \subseteq \conv(H_i^+ \cup H_i^-)$, where $H_i^+$ and $H_i^-$ are parallel hyperplanes at distance $\frac{m}{\|a_i\|}$ from each other. From the definition of $\delta$ we deduce $\|a_i\| \le \frac{m}{\delta}$. The hyperplane $H_i^+$ is at distance $\frac{|b_i|}{\|a_i\|}$ from $o$. Analogously, the hyperplane $H_i^-$ is at distance $\frac{|b_i-m|}{\|a_i\|}$ from $o$. It follows that both $H_i^+$ and $H_i^-$ are at distance at least $ \frac{|b_i| - m}{\|a_i\|}$ from $o$. On the other hand, the hyperplane $H_i^+$ or $H_i^-$ contains a point of $P$. Hence, by the choice of $\rho$, the hyperplane $H_i^+$ or $H_i^-$ is at distance at most $\rho$ from $o$. We get $\frac{|b_i|-m}{\|a_i\|} \le \rho$, which implies $|b_i| \le \rho \|a_i\| + m \le \frac{m \rho }{\delta} + m$. Thus, for every $L \in \cL$ one can find a representation \eqref{L:and:k:def}--\eqref{integral:params} such that $\|a_i\| \le \frac{m}{ \delta}$ and $|b_i| \le \frac{m \rho}{\delta} + m$ for every $i \in [n]$. Since $\rho$ and $\delta$ depend only on $P$, we get the assertion.
\end{proof}

\begin{proof}[Proof of Theorem~\ref{simple-reason-theorem}]
	Assume (ii) is fulfilled. By Theorem~\ref{lovasz-char}, for every $L \in \cL$, $\rec(L)$ is a linear space of dimension at most $d-1$. Hence $m(L)<+\infty$ for every $L \in \cL$. Since the parameter $m(L)$ is invariant with respect to $\integer^d$-equivalence, (i) follows immediately. 

	Now, we assume (i) and show (ii). If $L \in \cL$ is unbounded then, by Theorem~\ref{lovasz-char}, $L$ is $\integer^d$-equivalent to $L'\times \real^k$ for some
	$0< k < d$ and a $(d-k)$-dimensional maximal lattice-free polytope in $L'$ in $\real^{d-k}$. Without loss of generality one can assume $L= L' \times \real^{d-k}$. Then $m(L)=m(L')$ and $\conv(L \cap \integer^d) = \conv(L'\cap \integer^k) \times \real^{d-k}$. In view of the latter relations, we see that it is sufficient to consider the case that $\cL$ consists of bounded polyhedra. By assumption, the family $\cP:= \setcond{\conv(L \cap \integer^d)}{L \in \cL}$ consists of $d$-dimensional integral polytopes.  In view of  Lemma~\ref{vol:versus:maxfw} the volume of each $P \in \cP$ is at most $m(\cL)^d$. Having an upper bound on the volume for the class of integral polytopes $\cP$ we deduce that $\cP$ is finite up to $\integer^d$-equivalence (this implication is well known; see, for example, \cite{MR1191566}). We choose finitely many integral polytopes $P_1,\ldots,P_t$ ($t \in\natur$) such that each $P \in \cP$ is $\integer^d$-equivalent to some $P_i$ for $i \in [t]$. Then (ii) follows by applying Lemma~\ref{circumscribed:finite:lem} with $P=P_i$ for each $i \in [t]$.
\end{proof}

\begin{proof}[Proof of Corollary~\ref{simple-reason-corollary}]
	In view of Theorem~\ref{lovasz-char}.II every unbounded element $L$ of $\cL$ is $\integer^2$-equivalent to $[0,1] \times \real$. By this without loss of generality we can assume that every $L \in \cL$ is bounded. Then $L$ has at leas three edges and, by Theorem~\ref{lovasz-char}, $\dim (\conv (L \cap \integer^2))=2$. Thus, the assumptions of Theorem~\ref{simple-reason-theorem} are fufillied and the assertion follows.
\end{proof}

\begin{example} \label{counterexample}
	As shown by Corollary~\ref{simple-reason-corollary}, the assumption $\dim(\conv(L \cap \integer^d))=d$ in Theorem~\ref{simple-reason-theorem} can be omitted for $d=2$. On the other hand, if the dimension $d \in \natur$ is at least $3$, then 
	the assumption $\dim(\conv(L \cap \integer^d)) =d$ cannot be omitted in general. In fact, for every $d \ge 3$ we shall construct a family $\cL$ of rational maximal lattice-free polyhedra in $\real^d$ satisfying $\dim (\conv(L \cap \integer^d)) < d$ for every $L \in \cL^d$ and such that $m(\cL)<+\infty$ but $\cL$ is \emph{not} finite up to $\integer^d$-equivalence. Thus, for $\cL$ as above Condition~(i) from Theorem~\ref{simple-reason-theorem} is fulfilled while Condition~(ii) is not. Below, whenever we consider a vector $x \in \real^d$ and $i \in [d]$ we denote by $x_i$ the $i$-th component of $x$. Our construction employs the cross-polytopes $C_d$ ($d \in \natur$) given by 
	\begin{align*}
		C_d  :=& \setcond{x \in \real^d}{|2 x_1 - 1| + \cdots + |2 x_d - 1| \le d} \\
			  =& \setcond{x \in \real^d}{a_1 (2 x_1 - 1) + \cdots + a_d (2 x_d -1) \le d \quad \forall a \in \{-1,1\}^d}.
	\end{align*}
	It can be shown using Theorem~\ref{lovasz-char}.I that $C_d$ is maximal lattice-free. Below we define $\cL$ in such a way that, for every $L \in \cL$, the intersection of $L$ with the horizontal coordinate hyperplane  $\real^{d-1} \times \{0\}$ coincides with $C_{d-1} \times \{0\}$ and the transformation $F \mapsto F \cap (\real^{d-1} \times \{0\})$ is a bijection from the set of facets of $L$ to the set of facets of its section $C_{d-1} \times \{0\}$.

	We shall need the following simple observation. If $A \in \integer^{d \times d}$ is a nonsingular matrix and $b \in \integer^d$, then for the nonsingular affine transformation $\phi :\real^d \rightarrow \real^d$ given by $\phi(x) = A x +b$ ($x \in \real^d$) and every $d$-dimensional rational polyhedron $P$ in $\real^d$ the inequality
	\begin{equation} \label{m:and:preimage:bound}
		m(\phi^{-1}(P)) \le m(P)
	\end{equation}
	holds. This follows directly from the definition of the max-facet-width (see \eqref{L:and:k:def} and \eqref{integral:params}). Consider the set $A_+^d$ (resp. $A_-^d$) of all vectors $a \in \{-1,1\}^d$ with even (resp. odd) number of entries equal to $-1$. Every vector $a \in \{-1,1\}^{d-1}$ (where $d \ge 2$) can be extended to a vector $(a_1,\ldots,a_{d-1}, t) \in A_+^d$, where $t \in \{-1,1\}$ is uniquely determined by $a$. The latter is also true for $A_-^d$ in place of $A_+^d$. Consequently, for every $d \ge 2$ one has 
	\begin{align}
		 |A_\pm^d| & = 2^{d-1} \label{card:power:two} \\
		 \sum_{a \in A_\pm^d} a & = o, \label{sum:to:zero}
	\end{align}
	where $A_\pm^d$ stands for $A_+^d$ or $A_-^d$. The family $\cL$ is introduced by applying certain nonsingular affine transformations to the rational polyhedron 
	\[
		P := \setcond{x \in \real^d}{ \sprod{a}{x} \le d \quad \forall a \in A_+^d}.
	\]
	We have $\dim (P) =d$ since $o \in \intr(P)$. The max-facet-width $m(P)$ of $P$ can be bounded from above using \eqref{card:power:two} and \eqref{sum:to:zero}. In fact, for every $a \in A_+^d$ and $x \in P$ one has 
	\[
		\sprod{a}{x} = - \sum_{b \in A_+^d \setminus \{a\}} \sprod{b}{x} \ge - \sum_{b \in A_+^d \setminus \{a\}} d = -(2^{d-1} - 1) d = d - d 2^{d-1},
	\]
	which yields
	\begin{equation}
		m(P) \le d 2^{d-1}.
	\end{equation}
	For $i \in [d]$ the transformation $a \mapsto (a_1,\ldots,a_{i-1},a_{i+1},\ldots,a_d)^\top$, which discards the $i$-th entry,  maps bijectively $\setcond{a \in A_+^d}{a_i=1}$ onto $A_+^{d-1}$ and $\setcond{a \in A_+^d}{a_i=-1}$ onto $A_-^{d-1}$. Using \eqref{card:power:two} and \eqref{sum:to:zero} the component $x_i$ of every point $x \in P$ (where $d \ge 3$ and $i \in [d]$) can be bounded from above and below as follows:
	\begin{align*}
		& 2^{d-2} d \ge \sum_{\overtwocond{a \in A_+^d}{a_i =1}} \sprod{a}{x} = 2^{d-2} x_i & &\text{and} & 
		& 2^{d-2} d \ge \sum_{\overtwocond{a \in A_+^d}{a_i =-1}} \sprod{a}{x} = - 2^{d-2} x_i. 
	\end{align*}
	We have shown $ |x_i| \le d$ for each $x \in P$ and $i \in [d]$, where $d \ge 3$. That is,
	\begin{equation}
		P \subseteq [-d,d]^d.
	\end{equation}
	In particular, $P$ is bounded and thus $\vol(P) < +\infty$. For every $k \in \natur$, consider the affine transformation $\phi_k : \real^d \rightarrow \real^d$ given by
	\begin{equation}
		\phi_k(x) := (2 x_1 - 1,\ldots,2 x_{d-1}-1, k x_d)^\top \quad \forall x \in \real^d.
	\end{equation}
  	Each $k \in \natur$ determines the rational polyhedron
	\[
		L_k:= \phi_k^{-1}(P)  = \setcond{x \in \real^d}{\sprod{a}{\phi_k(x)} \le d \quad \forall a \in A_+^d}.
	\]

	We introduce $\cL$ by $\cL := \setcond{L_k}{k \in \natur, \ k \ge 2 d}$. By construction $\vol(L_k) = \frac{1}{k 2^{d-1}} \vol(P)$. The volume of any two $\integer^d$-equivalent sets is the same. Hence, we deduce that $\cL$ is \emph{not} finite up to $\integer^d$-equivalence. By \eqref{m:and:preimage:bound}, $m(\cL)  = \sup_{k \ge 2 d} m(L_k) \le m(P) \le d 2^{d-1} < +\infty$. The bound $k \ge 2 d$ on $k$ guarantees that $L_k$ is rather thin in the vertical direction. More precisely, we have
	\[
		L_k = \phi_k^{-1} (P) \subseteq \phi_k^{-1}( [-d,d]^d ) \subseteq \real^{d-1} \times \left[-\frac{d}{k},\frac{d}{k}\right] \subseteq \real^{d-1} \times \left[-\frac{1}{2},\frac{1}{2}\right],
	\]
	which implies that all integer points of $L_k$ lie in the coordinate hyperplane $\real^{d-1} \times \{0\}$.
	By construction $L_k \cap (\real^{d-1} \times \{0\}) = C_{d-1} \times \{0\}$. Thus, $L_k \cap \integer^d = (C_{d-1} \times \{0\} ) \cap \integer^d = \{0,1\}^{d-1} \times \{0\}$. The latter yields $\dim(\conv(L_k \cap \integer^d)) = d-1 < d$. The relative interior of each facet of $L_k$ contains a point from $\{0,1\}^{d-1} \times \{0\}$. Consequently, by Theorem~\ref{lovasz-char}.I, every polytope from $\cL$ is maximal lattice-free.
\end{example}

\subsection*{Acknowledgements}

I would like to thank an anonymous referee for pointing to \cite{basu2011triangle}.

\small 

%\bibliographystyle{amsplain}
%\bibliography{generalized-closures}

\providecommand{\bysame}{\leavevmode\hbox to3em{\hrulefill}\thinspace}
\providecommand{\MR}{\relax\ifhmode\unskip\space\fi MR }
% \MRhref is called by the amsart/book/proc definition of \MR.
\providecommand{\MRhref}[2]{%
  \href{http://www.ams.org/mathscinet-getitem?mr=#1}{#2}
}
\providecommand{\href}[2]{#2}

\end{document}